\DeclareMathOperator{\Lie}{Lie}
\DeclareMathOperator{\soc}{soc}
\newsavebox\ltmcbox
\newcounter{entryno}
\def\tabline{Test & \the\value{entryno} & Description\addtocounter{entryno}{1}\\}
\begin{document}

\newcounter{rownum}
\setcounter{rownum}{0}
\newcommand{\ab}{\addtocounter{rownum}{1}\arabic{rownum}}

\newcommand{\x}{$\times$}
\newcommand{\bb}{\mathbf}

\newcommand{\Ind}{\mathrm{Ind}}
\newcommand{\Char}{\mathrm{char}}
\newcommand{\hra}{\hookrightarrow}
\newtheorem{lemma}{Lemma}[section]
\newtheorem{theorem}[lemma]{Theorem}
\newtheorem*{TA}{Theorem A}
\newtheorem*{TB}{Theorem B}
\newtheorem*{TC}{Theorem C}
\newtheorem*{CorC}{Corollary C}
\newtheorem*{TD}{Theorem D}
\newtheorem*{TE}{Theorem E}
\newtheorem*{PF}{Proposition E}
\newtheorem*{C3}{Corollary 3}
\newtheorem*{T4}{Theorem 4}
\newtheorem*{C5}{Corollary 5}
\newtheorem*{C6}{Corollary 6}
\newtheorem*{C7}{Corollary 7}
\newtheorem*{C8}{Corollary 8}
\newtheorem*{claim}{Claim}
\newtheorem{cor}[lemma]{Corollary}
\newtheorem{conjecture}[lemma]{Conjecture}
\newtheorem{prop}[lemma]{Proposition}
\newtheorem{question}[lemma]{Question}
\theoremstyle{definition}
\newtheorem{example}[lemma]{Example}
\newtheorem{examples}[lemma]{Examples}
\theoremstyle{remark}
\newtheorem{remark}[lemma]{Remark}
\newtheorem{remarks}[lemma]{Remarks}
\newtheorem{obs}[lemma]{Observation}
\theoremstyle{definition}
\newtheorem{defn}[lemma]{Definition}

  \def\hal{\unskip\nobreak\hfil\penalty50\hskip10pt\hbox{}\nobreak
  \hfill\vrule height 5pt width 6pt depth 1pt\par\vskip 2mm}

\newcount\n
\n=0
\def\tablebody{}
\makeatletter
\loop\ifnum\n<100
        \advance\n by1
        \protected@edef\tablebody{\tablebody
                \textbf{\number\n.}& shortText
                \tabularnewline
        }
\repeat

\makeatletter
\let\mcnewpage=\newpage
\newcommand{\TrickSupertabularIntoMulticols}{%
  \renewcommand\newpage{%
    \if@firstcolumn
      \hrule width\linewidth height0pt
      \columnbreak
    \else
      \mcnewpage
    \fi
  }%
}
\makeatother

\renewcommand{\labelenumi}{(\roman{enumi})}
\newcommand{\Hom}{\mathrm{Hom}}
\newcommand{\Vm}{V_\mathrm{min}}
\newcommand{\Int}{\mathrm{int}}
\newcommand{\Ext}{\mathrm{Ext}}
\newcommand{\opH}{\mathrm{H}}
\newcommand{\D}{\mathcal{D}}
\newcommand{\SO}{\mathrm{SO}}
\newcommand{\Sp}{\mathrm{Sp}}
\newcommand{\SL}{\mathrm{SL}}
\newcommand{\GL}{\mathrm{GL}}
\newcommand{\OO}{\mathcal{O}}
\newcommand{\diag}{\mathrm{diag}}
\newcommand{\End}{\mathrm{End}}
\newcommand{\tr}{\mathrm{tr}}
\newcommand{\Stab}{\mathrm{Stab}}
\newcommand{\red}{\mathrm{red}}
\newcommand{\Aut}{\mathrm{Aut}}
\renewcommand{\H}{\mathcal{H}}
\renewcommand{\u}{\mathfrak{u}}
\newcommand{\Ad}{\mathrm{Ad}}
\newcommand{\N}{\mathcal{N}}
\newcommand{\C}{\mathbb{C}}
\newcommand{\Z}{\mathbb{Z}}
\newcommand{\la}{\langle}\newcommand{\ra}{\rangle}
\newcommand{\gl}{\mathfrak{gl}}
\newcommand{\g}{\mathfrak{g}}
\newcommand{\F}{\mathbb{F}}
\newcommand{\m}{\mathfrak{m}}
\renewcommand{\b}{\mathfrak{Borho}}
\newcommand{\p}{\mathfrak{p}}
\newcommand{\q}{\mathfrak{q}}
\renewcommand{\l}{\mathfrak{l}}
\newcommand{\del}{\partial}
\newcommand{\h}{\mathfrak{h}}
\renewcommand{\t}{\mathfrak{t}}
\renewcommand{\k}{\mathfrak{k}}
\newcommand{\Gm}{\mathbb{G}_m}
\renewcommand{\c}{\mathfrak{c}}
\renewcommand{\r}{\mathfrak{r}}
\newcommand{\nn}{\mathfrak{n}}
\newcommand{\s}{\mathfrak{s}}
\newcommand{\Q}{\mathbb{Q}}
\newcommand{\z}{\mathfrak{z}}
\newcommand{\pso}{\mathfrak{pso}}
\newcommand{\so}{\mathfrak{so}}
\renewcommand{\sl}{\mathfrak{sl}}
\newcommand{\psl}{\mathfrak{psl}}
\renewcommand{\sp}{\mathfrak{sp}}
\newcommand{\Ga}{\mathbb{G}_a}

\newenvironment{changemargin}[1]{%
  \begin{list}{}{%
    \setlength{\topsep}{0pt}%
    \setlength{\topmargin}{#1}%
    \setlength{\listparindent}{\parindent}%
    \setlength{\itemindent}{\parindent}%
    \setlength{\parsep}{\parskip}%
  }%
  \item[]}{\end{list}}

\parindent=0pt
\addtolength{\parskip}{0.5\baselineskip}
\newgeometry{margin=2cm}
\subjclass[2010]{17B45, 20G15}
\keywords{nilpotent orbits, Jordan blocks, minimal modules, exceptional Lie algebras}
\title{On the minimal modules for exceptional Lie algebras: Jordan blocks and stabilisers}

\author{David I. Stewart}
\address{University of Manchester, UK} \email{dis20@cantab.net {\text{\rm(Stewart)}}}
\pagestyle{plain}

\begin{abstract}Let $G$ be a simple simply-connected exceptional algebraic group of type $G_2$, $F_4$, $E_6$ or $E_7$ over an algebraically closed field $k$ of characteristic $p>0$ with $\g=\Lie(G)$.
For each nilpotent orbit $G\cdot e$ of $\g$, we list the Jordan blocks of the action of $e$ on the minimal induced module $\Vm$ of $\g$. We also establish when the centralisers $G_v$ of vectors $v\in \Vm$ and stabilisers $\Stab_G\la v\ra$ of $1$-spaces $\la v\ra\subset\Vm$ are smooth; that is, when $\dim G_v=\dim\g_v$ or $\dim \Stab_G\la v\ra=\dim\Stab_\g\la v\ra$.
\end{abstract}
\maketitle
\section{Introduction}

Let $G$ be a simply-connected exceptional algebraic group over an algebraically closed field $k$ of characteristic $p\geq 0$ with $\g=\Lie(G)$. It is a basic fact of the theory of algebraic groups that $G$ is defined over $\Z$; that is, there is a group $G_\Z$ such that after extension of scalars to $k$, one gets the group $G$. If $G$ is not of type $E_8$, then $G_\Z$ admits a non-trivial module $(\Vm)_\Z$ of smaller dimension. After reduction modulo $p$, one then gets a module $\Vm$ for $G$. In case $G=G_2,F_4,E_6$ or $E_7$ such a  module has dimension $7,26,27$ or $56$, respectively. Recall also that $G$ acts via the adjoint action on its Lie algebra;  the associated representation is called the adjoint module. With a classification of unipotent elements in hand, the Jordan block sizes of the action of unipotent elements of $G$ on the adjoint module $\g$ and minimal module $\Vm$ were computed in \cite{Law95} (see also \cite{Law98}) and have been used extensively by the mathematical community. Recall that the characteristic $p$ is good for the exceptional group $G$ if $p>3$ and if $G$ is of type $E_8$, $p>5$. In good characteristic one has a Springer morphism: a $G$-equivariant bijective map between the variety of unipotent elements of $G$ and the nilpotent cone $\N(\g)$ of $\g$. Thus the classification of orbits of nilpotent and unipotent elements is the same. At the beginning of \cite[\S3]{UGA05}, a reference to a private communication with Lawther indicates that he has checked that the Jordan blocks of unipotent elements and associated nilpotent elements on the adjoint and minimal modules are always the same in good characteristic, with a single exception: on the minimal $56$-dimensional module for $E_7$ when $p=5$, the regular nilpotent element has blocks of size $23^2,10$ whereas the regular unipotent element has blocks  $24,22,10$. A consequence of these calculations is that together with the remaining Jordan block sizes for nilpotent elements in bad characteristic found in \cite{UGA05}, the block sizes on the adjoint module are therefore known in all characteristics.  One aim of this note is to compute the Jordan block sizes of nilpotent elements on $\Vm$, which are new in bad characteristic. For completeness and ease of use, we have included the block sizes in good characteristic also.

\begin{theorem}\label{T1}The Jordan blocks of nilpotent elements $e$ on $\Vm$ are listed in Tables \ref{t2} and \ref{t3}.
\end{theorem}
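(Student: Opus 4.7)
The plan is to proceed orbit by orbit, organised first by the group $G$ and then by the characteristic $p$, splitting into the regimes of good and bad characteristic.

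\textbf{Good characteristic.} For $p>3$, a Springer morphism exists, and the introduction already records that the Jordan blocks of a nilpotent $e$ on $\Vm$ coincide with those of the associated unipotent element $u$ on $\Vm$, with the single exception of the regular class in $E_7$ at $p=5$. For $p=0$ and for $p$ large, Lawther's tables in \cite{Law95,Law98} supply the unipotent data, which can be transcribed directly into Tables \ref{t1}, \ref{t2}, \ref{t3}. For the exceptional case, I would take $e=\sum_{\alpha\in\Delta}e_\alpha$ summed over the simple roots, realise $\Vm$ for $E_7$ as the $56$-dimensional minuscule module on a weight basis, and compute the matrix of $e$ directly over $\F_5$.

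\textbf{Bad characteristic.} The classification of nilpotent $G$-orbits on $\g$ for $p$ bad is given in \cite{UGA05}, together with explicit representatives of each orbit (most often a short sum of root vectors). For each type, $\Vm$ admits a standard realisation on a weight basis: the $7$-dimensional module for $G_2$, the $26$-dimensional quotient of the Jordan-algebra module for $F_4$, the $27$-dimensional minuscule module for $E_6$, and the $56$-dimensional minuscule module for $E_7$. Fixing once and for all a Chevalley basis of $\g$, the matrix of each root vector $e_\alpha$ acting on $\Vm$ is determined by the structure constants of the chosen representation, and the matrix of any representative $e$ is obtained by addition.

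\textbf{Extracting the partition and main obstacle.} Once the matrix of $e$ on $\Vm$ is in hand, the Jordan partition of $e$ is determined by the rank sequence $\bigl(\rk(e^k)\bigr)_{k\geq 1}$: the number of Jordan blocks of size at least $k$ equals $\rk(e^{k-1})-\rk(e^k)$. Since $\dim \Vm\leq 56$ and $e$ is sparse in root coordinates, these computations are comfortably within reach of hand calculation or a short script in GAP or Magma, and yield the table entries orbit by orbit. The hard part is bookkeeping rather than mathematics: the number of orbits is largest in $E_7$ at $p=2,3$, where the $56$-dimensional module produces the biggest matrices and the greatest risk of arithmetic slips. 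One must also verify that a chosen representative genuinely lies in the stated orbit in the prescribed characteristic; this is already settled in \cite{UGA05} but must be respected when compiling the tables.
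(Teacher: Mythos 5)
Your overall computational strategy---realise $\Vm$ on a weight basis, form the integral matrix of a representative $e$, and read off the Jordan partition from the ranks of its powers modulo $p$---is the same as the paper's, which does exactly this in GAP by locating $\Vm$ inside the nilradical of a parabolic subalgebra of $E_6$, $E_7$ or $E_8$. But one concrete step in your plan fails: you propose to take the bad-characteristic orbit representatives from \cite{UGA05} and you assert that their membership in the stated orbits ``is already settled'' there. It is not. The paper documents in \S\ref{ugasec} that the representatives listed in \cite{UGA05} for the orbits $D_{r}(a_{1})$, $D_{r}(a_{1})+A_{s}$ (at $p=2$) and $D_{r}(a_{2})$, $D_{r}(a_{2})+A_{s}$ (at $p=3$) break down in exactly those characteristics---they do not lie in the claimed orbits---so feeding them into your rank computation would produce wrong rows of Tables \ref{t1}--\ref{t3} in precisely the bad-characteristic cases that are the point of the theorem. (This is how the referee caught a discrepancy at $p=2$.) The paper instead uses corrected representatives derived from \cite{LS12} and \cite{Sp84}, listed in its appendix; your argument needs these, or an independent verification that each chosen representative lies in the right orbit in each bad characteristic.

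Two smaller divergences are worth flagging. In good characteristic you import Lawther's unipotent data via the nilpotent/unipotent block correspondence, which rests on the unpublished private communication cited in \cite{UGA05}; the paper recomputes the nilpotent case directly, so that agreement with \cite{Law95} comes out as a corollary and a consistency check rather than an input. Also, your prime-by-prime plan does not explain how one certifies that the finitely many exceptional good primes appearing in the tables (e.g.\ $p=13,17,19,23,\dots$) are exactly the right ones; the paper settles this uniformly by computing the elementary divisors of each power of the integral matrix $M_e$, which bounds once and for all the set of primes at which the rank sequence can differ from its value over $\Q$.
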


As explained above, comparison with the tables in \cite{Law95} yields the following:
\begin{cor}[Lawther]\label{C1}In good characteristic, the Jordan block sizes on $\Vm$ of nilpotent and unipotent elements of the same label are the same, unless $p=5$ and $G=E_7$, where only the Jordan block sizes of the regular unipotent and nilpotent elements disagree.\end{cor}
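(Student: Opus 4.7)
The plan is essentially bookkeeping: Theorem~\ref{T1} supplies the Jordan block partitions for every nilpotent element $e\in\g$ acting on $\Vm$, while \cite{Law95} tabulates the Jordan block partitions for every unipotent element $u\in G$ on $\Vm$. Since both collections are organised by Bala--Carter label, the corollary reduces to a row-by-row comparison of two finite tables, once the labelings on the two sides have been identified.

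To make the identification rigorous, I would first recall that in good characteristic there is a Springer isomorphism $\N(\g)\to\mathcal{U}$, equivariant for the adjoint/conjugation action of $G$, so that nilpotent $G$-orbits on $\g$ correspond bijectively to unipotent $G$-orbits on $G$ and the Bala--Carter labels transfer canonically. This is what makes the phrase \emph{``unipotent and nilpotent elements of the same label''} meaningful and reduces the corollary to comparing two finite lists of partitions attached to a common index set. I would then work through the types $G_2$, $F_4$, $E_6$, $E_7$ in turn, and in each good characteristic check row-by-row that the partition recorded in Tables~\ref{t1}--\ref{t3} agrees with the one recorded by Lawther.

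In every row the two tables will agree, except at $G=E_7$ with $p=5$ on the regular orbit, where the entry in Table~\ref{t3} gives blocks $23^2,10$ whereas the regular unipotent element has blocks $24,22,10$; this is precisely the discrepancy flagged in the introduction via the private communication recorded in \cite[\S3]{UGA05}. The main (and only) obstacle is thus notational: one must confirm that the Bala--Carter symbol used by Lawther in \cite{Law95} for each unipotent class is the same symbol attached to the corresponding nilpotent class in our tables at each good prime. Once this matching is fixed by the Springer bijection, the remaining verification is a finite inspection with a single exception, and the corollary follows.
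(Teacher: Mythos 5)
Your proposal is correct and matches the paper's own argument: the corollary is obtained there precisely by a row-by-row comparison of the tables of Theorem~\ref{T1} against Lawther's tables in \cite{Law95}, with the Springer bijection in good characteristic supplying the identification of labels, and the sole discrepancy occurring for the regular element of $E_7$ when $p=5$. One minor slip: the $E_7$ data, including the $p=5$ regular entry $23^{2},10$, appears in Table~\ref{t1}, not Table~\ref{t3}.
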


Using the calculations in \cite{Law95}, one sees by inspection that the number of Jordan blocks of unipotent elements on the adjoint module is independent of good characteristic; this is reflecting the fact that the centralisers of unipotent elements $G_u$ are smooth. Another way of stating this is that the orbit $G\cdot u$ of $u$ is separable, or that $\Lie(C_G(u)(k))=\c_\g(u)$. The phenomenon that centralisers are usually smooth holds much more generally; see \cite{BMRT07} and more recently, \cite{Her13}. It was also noted in \cite{Law95} when the number of Jordan blocks of a unipotent element on $\Vm$ was the same as in characteristic zero. (It turned out that this held in good characteristic.) Thus in good characteristic the scheme of fixed points $(\Vm)^u$ is smooth.

We discuss the complementary question in our context, which is possibly more natural. Beforehand we must first be a little more precise about $\Vm$. Most of the time $\Vm$ is irreducible and the theory of high weights identifies $\Vm$ uniquely up to isomorphism (possibly after twisting with a graph automorphism in the case of $E_6$). The two exceptions are when $(G,p)=(F_4,3)$ or $(G_2,2)$. In this case there are essentially two ways in which one may construct a lattice in $(\Vm)_\Z$. For one of these, the resulting module  after reduction modulo $p$, henceforth $\Vm$, has a $1$-dimensional trivial module in its head with an irreducible $(\dim \Vm-1)$-dimensional socle; this is an \emph{induced}, \emph{co-standard} or \emph{dual-Weyl} module for $G$, and $\Vm^*$ is the corresponding \emph{standard} or \emph{Weyl} module for $G$. (Note that for the purposes of computing ranks of powers of matrices, hence Jordan blocks, it matters not whether one works with a module $V$ or its dual.)

With this clarification in hand, let $v\in \Vm$ and $\la v\ra=kv$ be  the $1$-space it spans over $k$. Then we establish when the stabilisers $G_v$ and $\Stab_G\la v\ra$ are smooth; by \cite[I.7.18(5)]{Jan03} this occurs precisely when $\g_v=\Lie(G_v(k))$ and $\Stab_\g\la v\ra=\Lie(\Stab_G\la v\ra(k))$, respectively. This amounts again to establishing when the orbit $G\cdot v$ (or $G\cdot \la v\ra$) is separable. 

One consequence is the following:

\begin{theorem}The stabilisers $G_v$ of vectors $v\in \Vm$ and $\Stab_G\la v\ra$ of $1$-spaces of $\Vm$ are smooth whenever $p$ is a good prime for $G$.\end{theorem}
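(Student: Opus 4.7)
The strategy rests on the criterion already highlighted in the paragraph above the theorem: by \cite[I.7.18(5)]{Jan03}, $G_v$ is smooth if and only if $\Lie G_v = \g_v$, equivalently $\dim G \cdot v = \dim(\g \cdot v)$, equivalently the orbit map $G \to G \cdot v$ is separable. Over $\C$ this is automatic, so the content of the theorem is that separability descends from characteristic zero to good positive characteristic for these particular representations.

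The plan is orbit-by-orbit. First I would appeal to the known classification of $G$-orbits on $\mathbb{P}(\Vm)$ in good characteristic, which matches that over $\C$: for $G_2$, $F_4$, $E_6$ there are finitely many orbits (cut out respectively by a non-degenerate quadratic form and by the standard rank stratifications on the exceptional Jordan modules), while for $E_7$ the $56$-dimensional module is prehomogeneous with a quartic invariant and its orbit types are likewise rank-stratified. Each orbit admits a representative defined over $\Z$, reducing the proof to a finite list of cases.

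For each such representative $v_0$ I would take $v_0$ to be a sum of $T$-weight vectors for a fixed maximal torus $T \subset G$, so that $\g_{v_0}$ decomposes into an explicit collection of root spaces plus a part of $\Lie T$, making $\dim \g_{v_0}$ a finite calculation. On the other side, $\dim G_{v_0} = \dim G - \dim G \cdot v_0$ is read off from the known codimension of the orbit. For the open orbits the stabiliser is already a well-known semisimple subgroup ($\SL_3$, $\mathrm{Spin}_9$, $F_4$, $E_6$ respectively), whose smoothness in good characteristic is a standard consequence of the theory of reductive subgroup schemes of exceptional groups, so these cases require no direct calculation.

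To pass from $G_v$ to $\Stab_G \la v \ra$ I would use the exact sequence $1 \to G_v \to \Stab_G \la v \ra \to H \to 1$ with $H \leq \Gm$ the image of the character by which $\Stab_G \la v \ra$ acts on $\la v \ra$; smoothness of $G_v$ together with smoothness of $H$ (which follows in good characteristic because $H^\circ$ is either trivial or $\Gm$ and any finite piece has order coprime to $p$) yields smoothness of the line stabiliser. The main obstacle I anticipate is the bookkeeping for $E_7$: although the prehomogeneous structure on $\Vm$ reduces the $G \times \Gm$-orbits to a finite list of rank types, the non-generic stabilisers there have non-reductive identity components, and the dimension matching against $\g_{v_0}$ has to be done carefully for each type.
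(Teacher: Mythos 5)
Your overall strategy---reduce to explicit $\Z$-defined orbit representatives and compare $\dim\g_{v_0}$, read off from the root/weight decomposition, with $\dim G_{v_0}$ known from the group-theoretic classification of stabilisers---is the same as the paper's, which implements that comparison as an integral matrix whose elementary divisors control all primes at once. But two parts of your plan break down as stated. Most concretely, the reduction to ``a finite list of cases'' is false for $F_4$: by Lemma \ref{stabilisers}(iii), $F_4(k)$ has \emph{infinitely} many orbits on the $1$-spaces of $V_{26}$, namely a one-parameter family of generic orbits whose stabilisers have reduced identity component of type $D_4$. No finite set of $\Z$-defined representatives covers this family; one needs a uniform argument in the parameter (the paper takes $v=e_1+te_2$ and does the rank computation over $\Z[t]$, checking the relevant $26\times 24$ matrix has rank $24$ for all $t\neq 0,1$ in every characteristic). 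The rank stratification of the exceptional Jordan algebra does not close up the case analysis here.

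Second, the two steps you declare to need no calculation are exactly where the content lies. For the open orbits, knowing $(G_v)_\red$ is reductive of the expected type gives only $\Lie((G_v)_\red)\subseteq\g_v$; smoothness is the reverse inequality, and it genuinely fails for these very modules in bad characteristic (the $F_4$-stabilised line in $V_{27}$ at $p=3$, the $E_6.2$-stabilised line in $V_{56}$ at $p=2$), so no general statement about reductive subgroup schemes can replace a verification that $\g_v$ is no larger than expected. Likewise, in your exact sequence $1\to G_v\to\Stab_G\la v\ra\to H\to 1$, the claim that the finite part of $H\leq\Gm$ has order prime to $p$ is precisely the assertion that $v\notin\g\cdot v$, i.e.\ that the relevant $\mu_n$ is \'etale; the $n$ occurring are governed by the degrees of the basic invariants, and while these degrees ($2$, $3$, $4$) happen to be divisible only by bad primes, you assert rather than establish this---and the exceptions (i)--(vi) of Theorem \ref{inParabolic} show the assertion is false outside good characteristic. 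So the proposal is a sound plan in outline, but it leaves ungrounded exactly the verifications (the $F_4$ family, the open-orbit dimension count, and the $v\in\g\cdot v$ test) that constitute the proof.
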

We were unable to find any general results in this direction. Thus we ask the following (probably too general) question:
\begin{question}Let $G$ be a reductive algebraic group over an algebraically closed field $k$ and $V$ be a restricted $G$-module. Under what circumstances are the centralisers $G_U$ or stabilisers $\Stab_G(U)$ of subschemes $U\subseteq V$ smooth algebraic groups?\end{question}
Let us reiterate that the paper \cite{Her13} gives an answer to the question about centralisers when $V$ is the adjoint module for $G$; the correct condition on the characteristic $p$ of $k$ is that it be \emph{pretty good} for $G$. Also, Cartier's theorem tells us that all algebraic groups are smooth over fields of characteristic zero, so there should be some hope that there are answers which involve a bound on $p$. (Possibly $p>\dim V$ might suffice for the answer to be yes.) Note that $V$ must be restricted for the answer to the question to be interesting: if $V$ is a Frobenius twist $W^{[1]}$ of another non-trivial $G$-module $W$, then $\g$ acts trivially on $V$, hence centralises every vector, but $G$ certainly does not. Futhermore, let us underline that already the question is interesting for the case that $U$ consists of a single $k$-point of $V$. The part of the question dealing with stabilisers is much less likely to have a nice answer: see \cite{HS14} which basically gives the answer for the corresponding question about normalisers of subspaces on the adjoint module.

\subsection{Comparison with UGA VIGRE results:}\label{ugasec} During the process of making these computations, it was discovered that there are a number of errors in the tables of representatives of nilpotent orbits in \cite{UGA05}. 
In the appendix we have provided a complete list of corrected representatives derived from \cite{LS12} and \cite{Sp84} that can be used to fix the UGA VIGRE tables. 

The first set of (minor) errors involve the transcription of representatives from the MAGMA code used into LaTeX, yet the stated Jordan block sizes remain correct. These orbit representatives are 
\begin{itemize} 
\item $F_{4}(a_{3})$ in $F_{4}$;
\item $A_6^{(2)}$ in $E_8$;
\item $E_7(a_1)$ in $E_8$;
\item $E_8(a_2)$, $E_{8}(a_{3})$, $E_8(a_4)$ in $E_{8}$.
\end{itemize} 
With these corrections the UGA VIGRE representatives are correct for $p=0$ and $p\geq 5$. 

The second set of errors orbits involve the following orbits in characteristics $2$ or $3$: 
\begin{itemize} 
\item $D_{r}(a_{1})$,  $D_{r}(a_{1})+A_{s}$, $p=2$; 
\item $D_{r}(a_{2})$, $D_{r}(a_{2})+A_{s}$, $p=3$; 
\item $E_8(b_6)$, $p=2$ and $3$.
\end{itemize} 
The problem is that the aforementioned listed representatives in \cite{UGA05} break down in either characteristic $2$ or $3$, meaning that the Jordan block sizes are 
not correct in these cases. One can use the tables in the appendix for these orbits and their Jordan blocks to correct the tables in \cite{UGA05}.  
In fact, we give all adjoint Jordan block sizes in all cases for completeness, even where they are known to coincide with those in \cite{Law95} for good characteristic.

\section{Jordan blocks}

We describe the method of computation of the tables of Jordan blocks. All this was done in GAP\footnote{The code is available at \url{github.com/davistem/nilpotent_orbits_GAP}}. For convenience, we locate the relevant modules $\Vm$ in subquotients of nilradicals of parabolic subalgebras of $\g$. The general theory on the structure of nilradicals of parabolic subalgebras of reductive Lie algebras $\g=\Lie(G)$ (and the group-theoretic analogues for $G$) can be found in \cite{ABS90}. With the notation of  \cite{Bourb82} we will give a description in terms of roots. In $\g$ of type $E_8$ with Cartan subalgebra $\h$, we locate a Levi subalgebra $\l$ of type $E_7$ containing $\h$ and corresponding roots of the form \[\def\arraystretch{0.5} \arraycolsep=0pt\begin{array}{c c c c c c c}*&*&*&*&*&*&0\\&&*\end{array},\]
where each value of $*$ is taken arbitrarily so that the result is a root. 
Then the derived subalgebra $\l'$ is simple of type $E_7$ and acts by derivations on the space spanned by the $56$ positive roots of the form 
\[\def\arraystretch{0.5} \arraycolsep=0pt\begin{array}{c c c c c c c}*&*&*&*&*&*&1\\&&*\end{array}\]
An analysis of the highest weight shows that this is indeed the $56$-dimensional module $\Vm=V_{56}$. Similarly, one locates $E_6$ in $E_7$ correponding to roots $\def\arraystretch{0.5} \arraycolsep=0pt\begin{array}{c c c c c c c}*&*&*&*&*&0\\&&*\end{array}$ acting on a $27$-dimensional minimal module $\Vm=V_{27}$ corresponding to roots $\def\arraystretch{0.5} \arraycolsep=0pt\begin{array}{c c c c c c c}*&*&*&*&*&1\\&&*\end{array}$. Nilpotent orbit representatives for $E_6$ and $E_7$ in all characteristics in terms of a sum of simple root spaces are available from \cite{LS12} and \cite{Sp84}.

Now one realises all these elements in GAP. The package {\tt LieAlgebras} in the standard distribution of GAP4 will construct a simple Lie algebra $\g$ of rank $r$ and number of positive roots $n=|R^+|$ over the rationals which comes with a `canonical' Chevalley basis $B$. This has $\{B[1],\dots,B[r]\}$ being a basis for the simple root spaces, $\{B[1],\dots,B[n]\}$ being a basis for the positive root spaces; for $1\leq i\leq n$ we have  $B[n+i]$ spans the root space corresponding to the negative root to which $B[i]$ corresponds, and $\{B[2n+1],\dots,B[2n+r]\}$ spans a Cartan subalgebra. Further, if $a,b\in \g$, then the operation $\tt a*b$ returns the commutator $[a,b]$ expressed via the $B[i]$. It is a simple matter to write any nilpotent representative  $e$ as a sum of a subset of the $B[i]$ and also identify those $B[i]$ which span a basis $B_V$ of $\Vm$ as described in the previous paragraph. One may then ask GAP to compute the action of $e$ on $\Vm$ as a matrix over the basis $B_V$ by hitting each vector of $B_V$ with $e$ and re-expressing this as a linear combination of elements of $B_V$. This associates $e$ to a $(\dim\Vm\times\dim\Vm)$-matrix $M_e$ whose entries are integers by virtue of the fact that $B$ was a Chevalley basis.

The Jordan blocks of $e$ are then determined by the ranks of successive powers of $M_e$. We first run a routine which bounds the number of exceptional primes whose Jordan block structure differs from the generic Jordan block structure as seen over $\Q$. This works simply by taking the union over all primes dividing the elementary divisors of each power of $M_e$. Then the Jordan block structure is output over $\Q$, together with the Jordan block structure over $\F_p$ for any exceptional $p$. The result for $E_6$ and $E_7$ is then output in the tables below.

The situation for the $7$- and $26$-dimensional minimal  modules for the Lie algebras of type $G_2$ and $F_4$ is done similarly, but one must work just a little harder. One has that $F_4$ is a subalgebra of $E_6$, such that the $E_6$-module $V_{27}$ has restriction to $F_4$ which is $\Vm\oplus k$ unless $p=3$ and $V_{27}|F_4$ is uniserial with successive factors $k$, $\soc \Vm$, and $k$. (It is in fact a tilting module.) If $p\neq 3$ then the module $\Vm$ is irreducible and self-dual, so that for all $p$ there is a quotient isomorphic to $\Vm$ of dimension $26$. 
Now $F_4$ is located in $E_6$ by sending the simple root vector $e_{\alpha_1}$ to $e'_{\alpha_2}$, $e_{\alpha_2}$ to $e'_{\alpha_4}$, $e_{\alpha_3}$ to $e'_{\alpha_3}+e'_{\alpha_5}$ and $e_{\alpha_4}$ to $e'_{\alpha_1}+e'_{\alpha_6}$. The remaining positive root elements of $F_4$ expresed in terms of those of $E_6$ can be generated from this. If $V_{27}$ has basis $B_V$ it is straightforward to locate a vector stabilised by $F_4$ and then form the matrix of the action of each element $e$ on the quotient; this gives a $26\times 26$-matrix $M_e$. One then repeats the procedure as described above for computing the Jordan blocks.

Finally, a similar procedure locates $G_2$ in a $D_4$-Levi subalgebra of $E_6$ sending $e_{\alpha_1}$ to $e'_{\alpha_2}+e'_{\alpha_3}+e'_{\alpha_5}$ in $E_6$ and $e_{\alpha_2}$ to $e'_{\alpha_4}$ in $E_6$. Then the $8$-dimensional natural module $V_8$ for $D_4$ corresponding to roots $\def\arraystretch{0.5} \arraycolsep=0pt\begin{array}{c c c c c c}0&*&*&*&0\\&&*\end{array}$ is obtained via the roots $\def\arraystretch{0.5} \arraycolsep=0pt\begin{array}{c c c c c c}1&*&*&*&0\\&&*\end{array}$. The restriction of $V_8$ to the $G_2$-subalgebra contains a trivial submodule such that the quotient is isomorphic to $\Vm$ as before. 

\section{Smoothness of centralisers and stabilisers}\label{sec:stabs}
In this section we consider simple, simply-connected algebraic groups of type $G:=G_2$, $F_4$, $E_6$ and $E_7$ over algebraically closed fields $k$ of arbitrary characteristic acting on their minimal modules $\Vm:=V_{7}$, $V_{26}$, $V_{27}$ and $V_{56}$, of dimensions $7$, $26$, $27$ and $56$ respectively. The stabilisers and centralisers of $1$-spaces of $\Vm$ for the corresponding finite groups $G_2(\F_q)$, $F_4(\F_q)$, $E_6(\F_q)$ and $E_7(\F_q)$ are well-known to group theorists and we record the extension which gives the reduced part $(\Stab_G\la v\ra)_\red$ of the scheme-theoretic stabilisers $\Stab_G\la v\ra$ in $G$ of the $1$-space $\la v\ra\in \Vm$ in the next lemma. (Recall that for an algebraic group $K$, over an algebraically closed field $k$, not necessarily smooth, one may associate a smooth algebraic group $K_\red\subseteq K$ such that $K_\red(k)=K(k)$.) We prove, using computational methods, that $\Stab_G\la v\ra$ and $G_v$ are smooth provided $p$ is a good prime, so that for these primes, $(\Stab_G\la v\ra)_\red=\Stab_G\la v\ra$ and the Lie theoretic stabiliser is recovered as $\Lie((\Stab_G\la v\ra)_\red)$. 


\begin{lemma}\label{stabilisers}\begin{enumerate}\item If $G$ is of type $E_7$ then $G(k)$ has four orbits on the $1$-spaces of $V:=V_{56}$. If $0\neq \la v\ra\subseteq V$ then the stabiliser $(\Stab_G\la v\ra)_\red$ is a closed subgroup of $G$ isomorphic to one of:
\begin{enumerate}\item an $E_6$ parabolic subgroup of $G$; 
\item the semidirect product of an $E_6$ Levi subgroup of $G$ with an involution inducing a graph automorphism on $E_6$; 
\item a subgroup of a $D_6$-parabolic subgroup $P$ of $G$ isomorphic to $B_5T_1R_u(P)$; 
\item a subgroup of an $E_6$-parabolic $P$ of $G$ equal to the semidirect product $HR_u(P)$, where $H$ is a subgroup of the Levi subgroup $L$ of $P$ of type $F_4T_1$.\end{enumerate}

\item If $G$ is of type $E_6$ then $G(k)$ has three orbits on the $1$-spaces of $V:=V_{27}$. If $0\neq \la v\ra\subseteq V$ then the stabiliser $(\Stab_G\la v\ra)_\red$ is a closed subgroup of $G$ isomorphic to one of: \begin{enumerate}\item a $D_5$-parabolic subgroup $P$ of $G$; 
\item a subgroup of a $D_5$-parabolic subgroup $P$ equal to the semidirect product $HR_u(P)$, where $H$ is a subgroup of the Levi subgroup $L$ of $P$ of type $B_4T_1$; 
\item a subgroup of $G$ of type $F_4$.\end{enumerate}

\item If $G$ is of type $F_4$ then $G(k)$ has infinitely many orbits on the $1$-spaces of $V:=V_{26}$. If $0\neq \la v\ra\subseteq V$ and $p\neq 3$ then the stabiliser $(\Stab_G\la v\ra)_\red$ is isomorphic to one of \begin{enumerate}\item $B_4$ (one orbit);
\item a $B_3$-parabolic subgroup $P$ of $G$ (one orbit);
\item a subgroup of $P$ isomorphic to $G_2T_1\ltimes k^{14}$ (one orbit);
\item a subgroup of $P$ isomorphic to $B_3\ltimes k^{7}$ (one orbit);
\item a $28$-dimensional subgroup such that the connected component $((\Stab_G\la v\ra)_\red)^\circ$ has type $D_4.$ In this case, each orbit contains a unique element in one of the $k\setminus\{0\}$ generic $1$-spaces of the $0$-weight space of $F_4$ on $V_{26}$. 
\end{enumerate}

\item If $G$ is of type $G_2$ then $G(k)$ has two orbits on the set of $1$-spaces of $V:=V_7$. If $0\neq \la v\ra\subseteq V$ then the stabiliser $(\Stab_G\la v\ra)_\red$ is isomorphic to one of \begin{enumerate}\item a long $A_1$-parabolic subgroup of $G$;
\item $A_2.2$ for a subsystem subgroup of type $A_2$ consisting of long roots.\end{enumerate}
\end{enumerate}
\end{lemma}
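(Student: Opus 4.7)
The plan is to identify, for each $G$, a complete set of $G(k)$-orbit representatives on the non-zero $1$-spaces of $\Vm$ and to compute the set-theoretic stabiliser of each as a closed reduced subgroup of $G$. Since the claim concerns only the reduced part $(\Stab_G\la v\ra)_\red$, one works entirely with $k$-points, combining invariant theory (to pin down the orbits) with explicit computation inside suitable parabolic subgroups (to pin down the stabilisers).

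For $E_7$ on $V_{56}$ and $E_6$ on $V_{27}$, I would use the $G$-invariant quartic (resp.\ cubic) form, which together with the symplectic (resp.\ symmetric) pairing gives a notion of rank of a non-zero vector in the underlying Freudenthal (resp.\ exceptional Jordan) algebra structure. This partitions $V\setminus\{0\}$ into four (resp.\ three) strata, each a single $G(k)$-orbit. A highest weight vector serves as the rank-$1$ representative in both cases; its projective stabiliser contains the $E_6$- (resp.\ $D_5$-) Levi together with all positive root subgroups outside the corresponding sub-root-system, forcing equality with an $E_6$- (resp.\ $D_5$-) parabolic on dimension grounds. Higher-rank representatives can be taken as carefully chosen sums of weight vectors --- for example $v_+ + v_-$ for the rank-$2$ orbit of $E_7$, whose stabiliser contains the $E_6$-Levi together with an element of $E_7$ inducing the graph involution of $E_6$ --- and the remaining stabiliser types (including $F_4\subset E_6$ fixed by the outer involution, and the subgroups of parabolics appearing in (1)(c)--(d)) are then verified by computing which root subgroups and lifted Weyl elements preserve the chosen line. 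The $G_2$-case is analogous, using the invariant quadratic form $Q$ on $V_7$: the two orbits are $\{Q=0\}\setminus\{0\}$ and $\{Q\ne 0\}$, with stabilisers the long $A_1$-parabolic and the subsystem $A_2.2$ respectively.

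The $F_4$-case is the most subtle and houses the main obstacle. With $p\ne 3$, the decomposition of $V_{26}$ restricted to a $B_4$-Levi (natural $\oplus$ spin $\oplus$ trivial) lets one locate representatives of orbits (i)--(iv) inside a $B_3$-parabolic $P$ and compute their stabilisers from the $P$-action on the weight-space components. The infinite family (v) arises because the $0$-weight space of $V_{26}$ is $2$-dimensional, so generic $1$-spaces in it yield distinct $G(k)$-orbits whose connected stabiliser is the $D_4$ subsystem subgroup of $F_4$ (the dimensions $28 = 52 - 24$ match, and $D_4$ fixes a $2$-dimensional subspace of $V_{26}$ via the decomposition $V_{26}|_{D_4} = V_8^{\oplus 3} \oplus k^2$ arising from triality). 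The chief obstacle is to verify simultaneously that the generic stabiliser really has identity component exactly $D_4$, and to identify precisely which lines of the $0$-weight space fall into orbits (i)--(iv) rather than the generic family (v); I would address these by producing $D_4$ as the common stabiliser of a $1$-parameter family of $0$-weight lines using the triality decomposition, and by analysing the action of the relative Weyl group on the $0$-weight space while cross-checking against the orbit-closure dimensions of (i)--(iv).
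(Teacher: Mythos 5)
Your route is genuinely different from the paper's. The paper does not stratify $\Vm$ by invariant-theoretic rank at all: it imports the orbits and stabilisers of the finite groups $G(\F_q)$ on $1$-spaces from Cohen--Cooperstein (for $E_6$, $F_4$), Liebeck--Seitz (for $E_7$) and Kleidman (for $G_2$), and transfers them to the algebraic group using density of $\bigcup_{r}H_{\bar\F_p}(\F_{p^r})$ in the reduced stabiliser together with the Galois-cohomology orbit count of \cite[I.2.7]{SpSt70}, reserving bespoke arguments only for the $D_4$-family in $F_4$ and for $(G_2,p=2)$. Your plan would make the lemma self-contained, but it has a genuine gap: the assertion that each rank stratum of the quartic (resp.\ cubic, quadratic) form is a single $G(k)$-orbit is exactly the hard content, and it is left unargued precisely in the bad characteristics this paper cares about. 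In characteristic $2$ the quartic invariant on $V_{56}$ and its polarisations degenerate, and in characteristic $2$ (resp.\ $3$) the module $V_7$ (resp.\ $V_{26}$) is not irreducible but uniserial with a trivial composition factor, so the rank stratification must first be redefined before transitivity on each stratum can even be posed; carrying this out amounts to reproving the classification results the paper cites.

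The most concrete failure is $G_2$ at $p=2$: there the bilinear form attached to $Q$ is alternating with a radical and $V_7$ has a trivial quotient, so the dichotomy $\{Q=0\}$ versus $\{Q\neq 0\}$ does not carve $V_7\setminus\{0\}$ into two $G(k)$-stable strata in the naive way. This is exactly the case the paper must treat separately: it observes that the two candidate stabilisers contain maximal smooth $\Z$-defined subgroups (hence equal them, as $G$ fixes no $1$-space), and then verifies that there are still only two orbits via the count $|G_2(q)|\bigl(1/|P(q)|+1/|A_2(q).2|+1/|{}^2A_2(q).2|\bigr)=(q^7-1)/(q-1)$. Your proposal contains no substitute for this step. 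By contrast, your treatment of the generic $D_4$-family for $F_4$ --- the $2$-dimensional $0$-weight space, the dimension count $28=52-24$, and the restriction $V_{26}|_{D_4}$ via triality --- is essentially sound and close in spirit to the paper's ``special plane'' argument inside $V_{27}$.
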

\begin{proof}
Let $H:=(\Stab_G\la v\ra)_\red$. First of all, observe that $H$, as a subgroup of the $\Z$-defined embedding of $G$ into $\GL(V)$ for $V=V_{27}$ or $V_{56}$ is defined over $\Z$, hence certainly over $\bar\F_p$. We have $H_{\bar{\F}_p}(\bar{\F}_p)=\bigcup_{r\geq 0} H_{\bar{\F}_p}(\F_{p^r})$, for instance by intersecting with $\GL(V)(\F_{p^r})$. Since $H$ is smooth, the $\bar{\F}_p$-points of $H$ are dense in $H$ and so we have that the union $\bigcup_{r\geq 0} H_{\bar{\F}_p}(\F_{p^r})$ is dense in $H$.

Assume we are not in case (iii)(e) or (iv) in characteristic $2$. Then the structure as stated, in view of \cite[I.2.7]{SpSt70}, follows directly from \cite[p467 \& Table 2]{CC88} (for $E_6$ and $F_4$), \cite[Lemma 4.3]{LS87} (for $E_7$) and \cite[Prop.~2.2]{Kle88} (for $G_2$). In reading those references, note that twisted subgroups such as ${}^2A_2(q)$ occur in the presence of a stabliser $\Stab_G\la v\ra$ of the the form $H\cdot \la\tau\ra$ where $\tau$ is a graph automorphism of $H$. Then the orbit $G\cdot v$ splits into $|\opH^1(F,G_v/G_v^\circ)|$ orbits under $G(\F_{p^r})$, by \cite[I.2.7]{SpSt70}.

A little more work is necessary to understand (iii)(e). It is shown in \cite{CC88} that each $1$-space not conjugate to any previously considered is conjugate to a subspace of a `special plane' $\pi=\la e_1,e_2,e_3\ra$ (in \cite{CC88} a `plane' is a plane of $\mathbb P(V_{27})$, hence a $3$-space of $V_{27}$).  Moreover, all such special planes are conjugate under the action of $E_6$ and one may choose the $e_i$ to be weight vectors for $E_6$. Since $F_4$ is in fact the stabiliser of an element $e=e_1+e_2+e_3$ of $\pi$, it is not hard to check that the $0$-weight space for a maximal torus $F_4$ in $V_{27}/\la e\ra$ is $\pi/\la e\ra$. Then a generic $1$-space in $\pi/\la e\ra$ is $\la e_1+t\cdot e_2\ra+\la e\ra$ for $t\neq 0,1$. In light of \cite{CC88}, the stabilisers in $G$ of all generic $1$-spaces are then seen to satisfy the conditions in (iii)(e) as stated.

For (iv) in characteristic $2$, note that the stabilisers of the $1$-spaces given are both maximal smooth subgroups $H_1$ and $H_2$ of $G$ which are $\Z$-defined. After reduction modulo $2$, we will therefore have containments $H_1\subseteq (\Stab_G\la v_1\ra)_\red$ and $H_2\subseteq (\Stab_G\la v_2\ra)_\red$. It cannot be the case that either $\Stab_G\la v_1\ra$ or $\Stab_G\la v_2\ra$ is the whole of $G$, since $G$ has no fixed $1$-spaces on $\Vm$, so the isomorphism types of $(\Stab_G\la v\ra)_\red$ must be as given. To see that the number of orbits is still the same, one counts the number of elements in orbits of $1$-spaces for $G_2(q)$. If $P$ is a long root parabolic of $G_2(q)$ it is an easy check that \[|G_2(q)|\cdot\left(\frac{1}{|P(q)|}+\frac{1}{|A_2(q).2|}+\frac{1}{|{}^2A_2(q).2|}\right)=\frac{q^7-1}{q-1}\] as required.
\end{proof}

Using GAP we find that the group-theoretic and infinitesimal stabilisers of $1$-spaces correspond.
\begin{theorem}\label{inParabolic}Let $G$ be simple and simply-connected of type $E_7$ (resp.~$E_6$, $F_4$, $G_2$) and let $V=V_{56}$ (resp.~$V=V_{27}$, $V_{26}$, $V_{7}$) be a minimal-dimensional non-trivial induced module for $G$.

Then the stabilisers in $G$ of vectors and $1$-spaces of $V$ are smooth---that is, $\dim G_v=\dim\g_v$ and $\dim \Stab_G\la v\ra=\dim \Stab_\g\la v\ra$---with the following exceptions:
\begin{enumerate}\item If $(G,p)=(E_7,2)$, then $G_v$ and $\Stab_G\la v\ra$ are not smooth if $\la v\ra$ has stabiliser of type $F_4T_1\ltimes k^{26}$.
\item If $(G,p)=(E_7,2)$, then $\Stab_G\la v\ra$ is not smooth if $\la v\ra$ has stabiliser of type $E_6.2$.
\item If $(G,p)=(E_6,3)$, then $\Stab_G\la v\ra$ is not smooth if  $\la v\ra$ has stabiliser of type $F_4$.
\item If $(G,p)=(F_4,2)$, then $\Stab_G\la v\ra$ is not smooth if $\la v\ra$ has stabiliser of type $B_3\ltimes k^{7}$.
\item If $(G,p)=(F_4,3)$, then $G_v$ and $\Stab_G\la v\ra$ are not smooth if $\la v\ra$ has stabiliser of type $G_2T_1\ltimes k^{14}$.
\item If $(G,p)=(G_2,2)$, then $G_v$ and $\Stab_G\la v\ra$ are not smooth if $\la v\ra$ has stabiliser which is a long $A_1$-parabolic.
\end{enumerate}
\end{theorem}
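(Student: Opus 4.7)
The proof proceeds by direct computation, extending the GAP-based methodology of Section~2. For each orbit of $1$-spaces classified in Lemma~\ref{stabilisers}, I would fix an explicit representative $v\in \Vm$ written as an integer combination of Chevalley basis vectors in the subquotient realisation of $\Vm$ described there. Lemma~\ref{stabilisers} gives $\dim(\Stab_G\la v\ra)_\red$, and since $G_v$ is a normal subgroup of $\Stab_G\la v\ra$ with quotient a subgroup of $\Gm$, it also gives $\dim G_v\in\{\dim\Stab_G\la v\ra-1,\dim\Stab_G\la v\ra\}$ once one checks whether $\Stab_G\la v\ra$ acts by nontrivial scalars on $\la v\ra$ (discernible from the stated structure).

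For the infinitesimal stabilisers I form the integer matrix $A_v$ of the $k$-linear map $\g\to \Vm$, $X\mapsto X\cdot v$, with respect to the chosen Chevalley bases, so that
\[\dim\g_v=\dim\g-\rk_k(A_v)\qquad\text{and}\qquad \dim\Stab_\g\la v\ra=\dim\g-\rk_k(\bar A_v),\]
where $\bar A_v$ is the integer matrix of $X\mapsto X\cdot v\bmod \la v\ra$. Each rank is computed once over $\Q$ and then, via Smith normal form over $\Z$, at every prime dividing the elementary divisors. Comparing with Lemma~\ref{stabilisers} the smoothness assertions and the six exceptions fall out directly. Note that $G_v$ may be smooth while $\Stab_G\la v\ra$ is not, because $\Stab_\g\la v\ra/\g_v$ is a subspace of $\Lie(\Gm)=k$ that can be one-dimensional even when $\Stab_G\la v\ra/G_v$ is trivial as a reduced group; this phenomenon accounts for exceptions (ii)--(iv).

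Case (iii)(e) of Lemma~\ref{stabilisers} contains an infinite family of orbits on $V_{26}$ for $F_4$ parametrised by $t\in k\setminus\{0,1\}$. For this I would replace $v$ with the family $v_t=e_1+t e_2$ (modulo the $F_4$-fixed line) and form $A_{v_t}$ as a matrix over $\Z[t]$; its generic rank over $\Q(t)$, together with Smith normal form computations over $\Z[t,t^{-1},(t-1)^{-1}]$, detects any special pairs $(t,p)$ at which the rank changes and confirms that the answer is uniform across the generic $1$-parameter family.

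The main obstacle is verifying that the chosen representative $v$ really lies in the claimed orbit in bad characteristic: Lemma~\ref{stabilisers} classifies only the reduced stabiliser $(\Stab_G\la v\ra)_\red$, so one must confirm that the predicted semisimple part $H$ indeed fixes $\la v\ra$ after reduction modulo $p$. I would do this by constructing explicit root-subgroup generators of $H$ from the embeddings already in place in Section~2 (for instance $F_4\hookrightarrow E_6$ and $G_2\hookrightarrow D_4\subseteq E_6$) and checking $H\cdot v\subseteq\la v\ra$ at the matrix level, using also the dimension count $\dim(\Stab_G\la v\ra)_\red=\dim H+\dim R_u$ as a sanity check. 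Once the representatives are secured, the theorem reduces to a direct tabulation and comparison of the two collections of dimensions, and the six listed exceptional cases are precisely the rows where they disagree.
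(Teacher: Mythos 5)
Your proposal is essentially the paper's own proof: fix integral Chevalley-basis representatives for each orbit from Lemma \ref{stabilisers}, compute the rank of the integer matrix of $X\mapsto X\cdot v$ (and its variant detecting whether $v\in\g\cdot v$, which is equivalent to your quotient matrix $\bar A_v$) via elementary divisors to get the ranks in every characteristic, compare with the reduced group-theoretic dimensions, and handle the $F_4$/$D_4$ one-parameter family by the same computation over $\Z[t]$ for $t\neq 0,1$. The only cosmetic difference is that the paper verifies the representatives by exhibiting the right number of non-isomorphic stabilisers rather than by checking $H\cdot v\subseteq\la v\ra$ generator-by-generator, but both are routine verifications of the same facts.
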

\begin{proof}
Let $v\in V=\Vm$ and set $K:=(\Stab_G\la v\ra)_\red$. Then certainly we have a containment $\Lie(K)=\Lie(\Stab_G\la v\ra)_\red\subseteq\Stab_\g\la v\ra=:\k$. We wish to show that equality holds. For this, it suffices to show that $\dim \Stab_\g\la v\ra=\dim (\Stab_G\la v\ra)_\red$. The values of the right-hand side are provided by Lemma \ref{stabilisers}.

To prove the equality of dimensions, we work with GAP in the following way:

\begin{enumerate}\item Construct $\Vm$ in GAP as in the previous section with $\g$ contained in a Levi subalgebra $\l$ of a parabolic $\p=\l+\q\subseteq \h$ for $\h=E_6$, $E_7$ or $E_8$, such that $\Vm$ is contained as a quotient of the unipotent radical $\q$ of $\p$.
\item Search for representatives for the $G$-orbits on $V$. Since one knows the finite number of orbits in case $G$ is of type $E_6$, $E_7$ and $G_2$, one simply seeks this number of non-isomorphic stabilisers in $G$. (The papers \cite[p467]{CC88} and \cite[Lemma 4.3]{LS87} provided guidance.) For $F_4$, except for stabilisers of type $D_4$ a similar procedure works, and the stabilisers of type $D_4$ are described explicitly in Lemma \ref{stabilisers}.  Representatives are given in Table \ref{reps}.\end{enumerate}
\emph{Assume for the moment that $G$ is not of type $F_4$ or $\la v \ra$ does not have stabiliser of type $D_4$.}
\begin{enumerate}[resume]\item For each element $b$ in a Chevalley basis $B_1$ of $\g$, calculate the coefficients of $[b, v]$ re-expressed in terms of the Chevalley basis $B$ of $\h$.
\item Form the matrix $M$ of these coefficients (which is integral, by our choice of representatives in Table \ref{reps}) and calculate its elementary divisors. It turns out that unless we are in one of the exceptional cases, all elementary divisors are either $1$ or $0$, hence the rank $r$ of this matrix will not change after reduction modulo $p$.
\item We have $\dim \g-r=\dim\g_v$. If $v\in\g\cdot v$ then $\dim \g_v= \dim\Stab_\g\la v\ra-1$. Otherwise, $\dim \g_v= \dim\Stab_\g\la v\ra$. To establish which, we simply add a new line to the matrix $M$ containing the coefficients of $v$ in terms of $B$ and take its elementary divisors again.
\item It turns out that apart from the exceptional cases $\dim\g_v=\dim G_v$ and $\dim\Stab_\g\la v\ra=\dim \Stab_G\la v\ra$.
\end{enumerate}
To deal with the case where $G=F_4$ and $\la v\ra$ has stabiliser of type $D_4$, we perform a similar calculation with $\g[t]$ and working with matrices over $\Z[t]$. Let $v=e_1+te_2$ be a generic element in the $0$-weight space of $\Vm$. Form $M$ as before to get a $26\times 52$ matrix. It turns out that $28$ of these rows are identically zero and so the rank will not change after they are removed. It also turns out that for $t\neq 0,1$, the resulting $26\times 24$-matrix has, for any choice of $t$, a single non-zero entry in each row, with no two non-zero entries in a common column. Thus the rank of the matrix is $24$ in all characteristics for all choices of $t\neq 0,1$, and we are done.
\end{proof}
\begin{table}\begin{tabular}{|c|c|c|c|}\hline
$G$ & $(\Stab_G\la v\ra)_\red$ & Representative in $\q$ & $\dim \Stab_g{\la v\ra}-\dim\g_v$\\
\hline
$G_2$ & $A_2$ & $e_{\alpha_1+\alpha_3+\alpha_4}$ & 0\\
& $A_1$-parabolic & $e_{\alpha_1}$ & $1$\\
\hline
$F_4$ & $B_4$ & $\def\arraystretch{0.5} \arraycolsep=0pt
e_{\tiny\begin{array}{c c c c c c c}1&1&2&2&1&1\\&&1\end{array}}$ & $0$\\
 & $B_3T_1\ltimes k^{14}$ & $e_{\alpha_7}$ & $1$\\
  & $G_2T_1\ltimes k^{14}$ & $\def\arraystretch{0.5} \arraycolsep=0pt
e_{\alpha_7}+e_{\tiny\begin{array}{c c c c c c c}1&3&4&3&2&1\\&&2\end{array}}$& $1$\\
 & $B_3\ltimes k^7$ & $\def\arraystretch{0.5} \arraycolsep=0pt
e_{\alpha_7}+e_{\tiny\begin{array}{c c c c c c c}1&2&2&1&1&1\\&&1\end{array}}+e_{\tiny\begin{array}{c c c c c c c}1&3&4&3&2&1\\&&2\end{array}}$ & $0$ ($1$ if $p=2$)\\
  & $D_4$ & $\def\arraystretch{0.5} \arraycolsep=0pt e_{\tiny\begin{array}{c c c c c c c}1&2&2&1&1&1\\&&1\end{array}}+
t\cdot e_{\tiny\begin{array}{c c c c c c c}1&1&2&2&1&1\\&&1\end{array}}$, $t\neq 0,1$ & $0$
\\
\hline
$E_6$ & $D_5$-parabolic & $e_{\alpha_7}$ & $1$\\
& $B_4T_1\ltimes k^{16}$ & $e_{\alpha_7}+e_{\tilde\alpha}$ & $1$\\
& $F_4$ &$\def\arraystretch{0.5} \arraycolsep=0pt e_{\tiny\begin{array}{c c c c c c c}1&2&2&1&1&1\\&&1\end{array}}+
 e_{\tiny\begin{array}{c c c c c c c}1&1&2&2&1&1\\&&1\end{array}}+e_{\tiny\begin{array}{cccccc}0&1&2&2&2&1\\&&1\end{array}}$&$0$ ($1$ if $p=3$)\\\hline
$E_7$ & $E_6$-parabolic & $e_{\alpha_8}$ & $1$\\
 & $F_4T_1\ltimes k^{26}$ & \def\arraystretch{0.5} \arraycolsep=0pt $e_{\tiny\begin{array}{ccccccc}2&3&4&3&2&2&1\\&&2\end{array}}+e_{\tiny\begin{array}{ccccccc}1&3&4&3&3&2&1\\&&2\end{array}}+e_{\tiny\begin{array}{ccccccc}1&2&4&4&3&2&1\\&&2\end{array}}$ & $1$\\
& $B_5T_1\ltimes k^{1+32}$ & \def\arraystretch{0.5} \arraycolsep=0pt $e_{\tiny \begin{array}{ccccccc}2&3&5&4&3&2&1\\&&3\end{array}}+e_{\tiny\begin{array}{ccccccc}2&4&5&4&3&2&1\\&&2\end{array}}$ & $1$\\
& $E_6.2$ & $e_{\alpha_8}+e_{\tilde\alpha-\alpha_8}$ & $0$ ($1$ if $p=2$)\\
\hline
\end{tabular}\vspace{7pt}\caption{Representatives of the orbits of $E_6$ and $E_7$ on minimal modules}\label{reps}\end{table}

\begin{remarks}Except when $(G,p)=(G_2,2)$, for each exceptional case from Lemma \ref{inParabolic} we have that the group-theoretic stabiliser is one fewer dimension than that in the Lie algebra. In each case, there is an extra toral element which stabilises $v$ or $\la v\ra$ which is not in the Lie algebra of the reduced part of the group-theoretic stabiliser.

For $(G,p)=(G_2,2)$, the non-smoothness of the stabiliser of the $1$-space whose reduced part is an $A_1$-parabolic (which is a maximal smooth subgroup) implies  that there is a maximal rank subalgebra $\h=\g_v$ containing an $A_1$-parabolic subalgebra of $\g$ which is not the Lie algebra of any smooth subgroup of $G$, hence is not obtained using the Borel--de-Siebenthal algorithm. For our representative $v$, \[\Stab_\g\la v\ra=\la e_{\alpha_2},e_{\alpha_4},e_{-\alpha_1},e_{-\alpha_2},e_{-\alpha_3},e_{-\alpha_4},e_{-\alpha_5},e_{-\alpha_6},h_1,h_2\ra,\] where $\la h_1,h_2\ra$ is a Cartan subalgebra of $\g$. This is a long $A_1$-parabolic after throwing in the outstanding root subspace $\la e_{\alpha_4}\ra$ of the short $\tilde A_1$ subalgebra which commutes with the $A_1$ Levi $\la e_{\pm\alpha_2},h_1,h_2\ra$.

For $G_2$ in characteristic $2$, there are many more such maximal rank subalgebras, including one of dimension $11$. These are discussed in some generality in \cite{Leh}. One reason for the explosion in possibilities is the fact that in characteristic $2$, one has, remarkably, an isomorphism of Lie algebras $\g\cong \psl_4$.

Finally, let us remark also that when $p=2$, the Lie algebras $G_2T_1\ltimes k^{14}$ and $B_3T_1\ltimes k^7$ are isomorphic; thus, while the stabilisers of the $1$-spaces in the relevant orbits in $V_{26}$ are not isomorphic algebraic groups, their Lie algebra stabilisers are. 
\end{remarks}

\subsection*{Acknowledgements} We thank A.~Thomas, A.~Premet and D.~Nakano for helpful discussions. We thank the referee for checking the results of this paper carefully. In particular, we are extremely grateful that a discrepancy was noted in characteristic $2$, which we eventually traced back to errors in the representatives of orbits $D_r(a_i)$ in \cite{UGA05}---see \S\ref{ugasec}.
\newpage\newgeometry{left=0.5cm,right=0.5cm,top=1.1cm,bottom=1.1cm}
 \section*{Appendix A: Jordan block structures on the minimal modules }
\ 
\def\arraystretch{1.2} \arraycolsep=0pt 
\input{jbs-min.tex}
 

\newpage

\section*{Appendix B: Representatives of nilpotent orbits}

 \let\oldarraystretch\arraystretch
\let\oldarraycolsep\arraycolsep

\def\arraystretch{1} \arraycolsep=0pt 

\FloatBarrier
\begin{table}[!htb]
\begin{minipage}{.3\linewidth}\begin{center}

\FloatBarrier
\def\arraystretch{\oldarraystretch} \arraycolsep=\oldarraycolsep 
\section*{Appendix C: Jordan block structures on the adjoint modules}
\input{jbs-ads.tex}
\FloatBarrier
\newgeometry{margin=2cm}
{\footnotesize
\bibliographystyle{amsalpha}
\bibliography{bib}}

\end{document}